\theoremstyle{thmstyleone}%
\newtheorem{theorem}{Theorem}[section]
\newtheorem{proposition}[theorem]{Proposition}
\newtheorem{lemma}[theorem]{Lemma}
\newtheorem{corollary}[theorem]{Corollary}
\newtheorem{example}[theorem]{Example}
\newtheorem{definition}[theorem]{Definition}
\begin{document}

\title[$\Sigma$-dual-Rickart Modules]{$\Sigma$-dual-Rickart Modules}


\author[1]{\fnm{Shiv} \sur{Kumar}}\email{shivkumar.rs.mat17@itbhu.ac.in}

\author[2]{\fnm{Ashok Ji} \sur{Gupta}}\email{agupta.apm@itbhu.ac.in}


\affil[1]{\orgdiv{Department of Mathematical Sciences}, \orgname{IIT(BHU)}, \orgaddress{ \city{Varanasi}, \postcode{221005}, \state{Uttar Pradesh}, \country{India}}}

\affil[2]{\orgdiv{Department of Mathematical Sciences}, \orgname{IIT(BHU)}, \orgaddress{ \city{Varanasi}, \postcode{221005}, \state{Uttar Pradesh}, \country{India}}}



\abstract{In this paper, we introduce and study the concept of {\it $\Sigma$-dual-Rickart} modules which is the dual case of  $\Sigma$-Rickart modules. An $A$-module $X$ is said to be a {\it $\Sigma$-dual-Rickart} if every direct sum of copies of $X$ is a dual-Rickart module.  We show when a $\Sigma$-Rickart module is a {\it $\Sigma$-dual-Rickart} module and vice-versa. Also, we characterize semisimple Artinian rings, regular rings, semi-hereditary rings, cohereditary modules and $FP$-injective  modules in terms of {\it $\Sigma$-dual-Rickart} modules. Further, we study the endomorphism ring of a {\it $\Sigma$-dual-Rickart} module.}

\keywords{d-Rickart module, {\it $\Sigma$-d-Rickart} module, Cohereditary module, Endomorphism ring.}


\pacs[MSC Classification]{Primary 16D10, 16D40; Secondary 16E50.}

\maketitle

	\section{Introduction}\label{sec1}
In this article, all rings are associative with unity and all modules are unital right $A$-modules, unless otherwise specified. We recall that a ring $A$ is called $Baer$ \cite{IK} if the right annihilator of every right ideal in $A$ is generated by an idempotent element of $A$ as a right ideal of $A$, while a ring $A$ is called $Rickart$ \cite{Maeda} if the right annihilator of every element of $A$ is generated by an idempotent element of $A$ as a right ideal of $A$. The structural characteristics of $Baer$ and $Rickart$ rings exhibit a close connection with $C^*$-algebras and von Neumann algebras, rooted in the principles of functional analysis. Also, the theory of $Rickart$ rings plays a significant role in the structure theory of rings. Moreover, the theory of $Rickart$ rings plays a crucial role in the study of ring structures. Therefore, the exploration of the module theoretical concepts related to $Baer$ and $Rickart$ rings has emerged as an appealing subject for researchers.
In \cite{LRRR} and \cite{LRRD}, Lee et al. introduced the notion of $Rickart$ modules and $dual$-$Rickart$ (in short we will call $d$-$Rickart$) modules, respectively. They call a module $X$ $Rickart$ ($d$-$Rickart$) if $\operatorname{Ker}(\phi)$ ($\operatorname{Im}(\phi)$) is a direct summand of $X$ for every $\phi \in \operatorname{End}_A(X)$. Over the recent years, numerous researchers have contributed to the advancement of theory of $Rickart$ and $d$-$Rickart$ modules. (see \cite{AH}, \cite{Ext}, \cite{LRRR}, \cite{LRRD}, \cite{LM1}). It is observed that the direct sum of Rickart modules may not necessarily be a Rickart module (see \cite{LRRS}). In \cite{LM1}, Lee and Barcenas introduced $\Sigma$-$Rickart$ modules and they called a module $X$, $\Sigma$-$Rickart$ if every direct sum of copies of $X$ is a $Rickart$ module.

Motivated by the notion of $\Sigma$-Rickart modules, we introduce the notion of {\it $\Sigma$-dual-Rickart} (or in short {\it $\Sigma$-d-Rickart}) modules in Section 2. We call a module $X$ $\Sigma$-$d$-$Rickart$, if the direct sum of arbitrary many copies of $X$ is a $d$-$Rickart$ module. We prove that {\it $\Sigma$-d-Rickart} modules are closed under direct summands and direct sums. Recall from \cite{MS} that a module $X$ is $hereditary$ $(cohereditary)$ if every submodule (factor module) of $X$ is $projective$ $(injective)$. We show that every cohereditary module over a Noetherian ring is $\Sigma$-$d$-$Rickart$ module (see, Proposition \ref{S2.6}).

In section 3, we provide some examples which show that a $\Sigma$-$d$-$Rickart$ module need not be a $\Sigma$-$Rickart$ module and vice-versa (see Example \ref{SDR1.1}). An $A$-module $X$ is said to be a $\Sigma$-$\operatorname{C2}$ ($\Sigma$-$\operatorname{D2}$) module if every direct sum of copies of $X$ is a $\operatorname{C2}$ ($\operatorname{D2}$)-module. We show that every $\Sigma$-$Rickart$ ($\Sigma$-$d$-$Rickart$) module $X$ is a $\Sigma$-$d$-Rickart ($\Sigma$-Rickart) module if the module $X$ is $\Sigma$-$\operatorname{C2}$ ($\Sigma$-$\operatorname{D2}$) (see Proposition \ref{SDR1.2} (Proposition \ref{SDR1.4})).  We also prove that every $A$-module is {\it $\Sigma$-d-Rickart} if and only if every $A$-module is $\Sigma$-Rickart if and only if the ring $A$ is semisimple Artinian (see Theorem \ref{SDR1.5}).

In section 4, we study the endomorphism ring of a {\it {\it $\Sigma$-d-Rickart}} module. We support this notion by presenting an example that demonstrates the endomorphism ring of a {\it $\Sigma$-d-Rickart} module, which may not be hereditary (see Example \ref{S3.1}), while it is left semi-hereditary (see Proposition \ref{S3.2}). Further, we prove that the endomorphism ring of a finitely generated {\it $\Sigma$-d-Rickart} module is hereditary (see Proposition \ref{S3.4}). Examples and counterexamples are given, which delineate our results.
\section{Preliminaries}
{\bf Notations} $\subseteq$, $\leq$, $\leq ^{\oplus}$ are used to denote a subset, a submodule and a direct summand respectively. If $X$ is an $A$-module and $\Lambda$ is a non-empty index set, then $\bigoplus_{\Lambda} X$ or $X^{(\Lambda)}$ and $\Pi_{\Lambda}X$ or $X^{\Lambda}$, denote the direct sum and the direct product of $\Lambda$-copies of $X$, respectively. $CFM$ denotes the column finite matrix ring, $A$-Mod denotes the category of all left $A$-modules, Mod-$A$ denotes the category of all right $A$-modules and $E(X)$ denotes the injective hull of an $A$-module $X$.\\
\begin{definition}
	A module $X$ is said to have the $\operatorname{C2}$-condition if any submodule of $X$ which is  isomorphic to a direct summand of $X$ is also a direct summand of $X$. A module with $\operatorname{C2}$-condition is known as $\operatorname{C2}$-module \cite{BJM}.
\end{definition}
~\\
\begin{definition}
	A module $X$ is said to have the $\operatorname{D2}$-condition if for any submodule $N\leq X$, $X/N$ is isomorphic to a direct summand of $X$, then $N$ is also a direct summand of $X$. A module with $\operatorname{D2}$-condition is known as $\operatorname{D2}$-module \cite{BJM}.
\end{definition}
~\\
\begin{definition}
	A module $X$ is called a $\operatorname{C3}$-module if whenever $Y_1$ and $Y_2$ are submodules of $X$ such that $Y_1\leq ^{\oplus} X$, $Y_2\leq ^{\oplus} X$ and $Y_1\cap Y_2=0$, then $Y_1\oplus Y_2\leq ^{\oplus} X$ \cite{C3}.
\end{definition}
~\\
\begin{definition}
	A module $X$ is called a $\operatorname{D3}$-module if whenever $N_1$ and $N_2$ are submodules of $X$ such that $N_1\leq ^{\oplus} X$, $N_2\leq ^{\oplus} X$ and $N_1+N_2=X$ then $N_1\cap N_2\leq ^{\oplus} X$ \cite{D3}.
\end{definition}
~\\
\begin{definition}
	(16.9, \cite{RWF}) Let $X$ and $N$ be $A$-modules. The module $N$ is called a $weakly$ $X$-$injective$ module, if for every finitely generated submodule $L$ of $X^{(\mathbb{N})}$ with a monomorphism $\phi : L\rightarrow X^{(\mathbb{N})}$ and for every homomorphism $\psi : L\rightarrow N$ there exists a homomorphism $\eta : X^{(\mathbb{N})}\rightarrow N$ such that $\psi=\eta \phi$. If $X=R$, then weakly $A$-injective modules are also called $FP$-$injective$.
\end{definition}
~\\
\begin{definition}
	A module $X$ is called $endoregular$ if the endomorphism ring of $X$ is a von Neumann regular ring \cite{RRER}.
\end{definition}
~\\
\begin{lemma}\label{LD}
	(Theorem 2.19, \cite{LRRD}). Let $X_1$ and $X_2$ be $A$-modules. Then $X_1$ is $X_2$-d-Rickart if and only if for any direct summand $N_1 \leq ^{\oplus} X_1$ and any submodule $N_2 \leq X_2$, $N_1$ is $N_2$-d-Rickart.
\end{lemma}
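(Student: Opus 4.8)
The plan is to prove the two implications separately; the backward direction is immediate by specialization, while the forward direction rests on a standard pull-back construction followed by a modular-law descent. Throughout I use the relative notion in its natural form: $M_1$ is $M_2$-dual Rickart precisely when $\mathrm{Im}(\phi)\leq^{\oplus}M_2$ for every $\phi\in\mathrm{Hom}_R(M_1,M_2)$.

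For the backward implication, I would simply take $N_1=M_1$ and $N_2=M_2$. Since $M_1\leq^{\oplus}M_1$ and $M_2\leq M_2$, the hypothesis applied to this choice says exactly that $M_1$ is $M_2$-dual Rickart, so this direction carries no real content. For the forward implication, assume $M_1$ is $M_2$-dual Rickart and fix a direct summand $N_1\leq^{\oplus}M_1$ and a submodule $N_2\leq M_2$. Write $M_1=N_1\oplus N_1'$, let $\pi:M_1\to N_1$ be the associated projection, and let $\iota:N_2\hookrightarrow M_2$ be the inclusion. To show $N_1$ is $N_2$-dual Rickart, I must check that $\mathrm{Im}(\psi)\leq^{\oplus}N_2$ for an arbitrary $\psi\in\mathrm{Hom}_R(N_1,N_2)$. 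The key idea is to lift $\psi$ to the ambient modules by setting $\phi:=\iota\,\psi\,\pi\in\mathrm{Hom}_R(M_1,M_2)$; since $\pi$ is surjective onto $N_1$ and $\iota$ is injective, one computes $\mathrm{Im}(\phi)=\iota(\psi(N_1))=\mathrm{Im}(\psi)$ regarded as a submodule of $M_2$.

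Applying the hypothesis to $\phi$ then yields $\mathrm{Im}(\psi)=\mathrm{Im}(\phi)\leq^{\oplus}M_2$. The final step is to descend this summand property from $M_2$ to $N_2$, and this is the only place requiring care. Because $\mathrm{Im}(\psi)\leq N_2\leq M_2$, I would write $M_2=\mathrm{Im}(\psi)\oplus D$ and invoke the modular law: since $\mathrm{Im}(\psi)\leq N_2$, one gets $N_2=N_2\cap\bigl(\mathrm{Im}(\psi)\oplus D\bigr)=\mathrm{Im}(\psi)\oplus(N_2\cap D)$, where the sum is direct because $\mathrm{Im}(\psi)\cap(N_2\cap D)\leq\mathrm{Im}(\psi)\cap D=0$. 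Hence $\mathrm{Im}(\psi)\leq^{\oplus}N_2$, so $N_1$ is $N_2$-dual Rickart. I expect this modularity descent to be the main (though minor) obstacle, as it is what allows the summand condition proved in $M_2$ to be transferred to the smaller target $N_2$; every other step is purely formal.
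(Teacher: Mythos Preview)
Your proof is correct. The paper does not supply its own argument for this lemma; it is simply quoted as \cite[Theorem~2.19]{LRRD} with no proof given. Your approach---lifting $\psi$ to the ambient map $\phi=\iota\,\psi\,\pi$, applying the hypothesis to obtain $\mathrm{Im}(\psi)\leq^{\oplus}M_2$, and then descending to $N_2$ via the modular law---is the standard proof and is essentially what appears in the cited source.
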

~\\
\begin{lemma}\label{LM1.1}
	Let $X$ and $N$ be $A$-modules such that $\psi :X\rightarrow N$ be a monomorphism and $Y\leq X$. If $\psi(Y)$ is a direct summand of $N$, then $Y$ is a direct summand of $X$.
	\begin{proof}
		Suppose that $N=\psi(Y){\oplus} Z$ for some $Z\leq N$. Since $\psi$ is a monomorphism, so $X=Y + \psi ^{-1}[Z]$, where $\psi ^{-1}[Z]$ is the set of inverse images of $Z$ under $\psi$. If $m\in Y\cap \psi ^{-1}[Z]$, then there exists $l\in Z$ such that $\psi(m)=l$. Since $m\in Y$, $y\in \psi (Y)$, therefore $l\in \psi (Y)\cap Z=0$, which implies $m=0$. Hence $X=Y\oplus \psi
		^{-1}[Z]$.
	\end{proof}
\end{lemma}
~\\
\begin{lemma}\label{LI}
	(Corollary, \cite{BL}). Let $A$ be a ring that contains an infinite direct product $\Pi_{i\in I} A_i$, where $A_i$ is a ring with identity $e_i$ for $i\in I$. Then $A$ is not a (right) hereditary ring.
\end{lemma}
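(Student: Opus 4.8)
The plan is to argue by contradiction: assume $R$ is right hereditary and manufacture a right ideal that fails to be projective. First I would reduce to the case where $R$ is the product itself. Whatever the precise force of ``contains'', the product sits inside $R$ as a ring direct factor cut out by a central idempotent, and a ring direct factor of a right hereditary ring is again right hereditary: its right ideals are right ideals of $R$, hence projective, and projectivity over a ring direct factor agrees with projectivity over the whole ring. A second reduction lets me assume the index set is countable, since partitioning the infinite set $I$ into countably many nonempty blocks and taking products over the blocks rewrites $\prod_{i\in I}R_i$ as a countable product $\prod_{n\in\mathbb{N}}S_n$ of nonzero rings with identity. So from now on $R=\prod_{n\in\mathbb{N}}R_n$.

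Next I would lay down the combinatorial skeleton. Let $e_n\in R$ be the idempotent that equals the identity of $R_n$ in the $n$-th slot and $0$ elsewhere; these are orthogonal, and $D:=\bigoplus_{n}R_n=\bigoplus_n e_nR$ is the two-sided ideal of finitely supported elements. Being a direct sum of the projective right ideals $e_nR$, the ideal $D$ is itself projective, so the short exact sequence
\[
0\longrightarrow D\longrightarrow R\longrightarrow R/D\longrightarrow 0
\]
has projective kernel. The elementary point is that it does not split: a splitting would give an idempotent $g=(g_n)$ with $D=gR$, but then $g\in D$ is finitely supported, so $g_m=0$ for all large $m$, whence the $m$-th coordinate of every element of $gR$ is $0$ and $e_m\notin gR=D$, contradicting $e_m\in D$. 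Thus $R/D$ has projective dimension exactly $1$, which so far only records that the right global dimension is at least $1$.

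The heart of the proof, and the step I expect to be the main obstacle, is to upgrade this to a genuine failure of hereditariness, i.e.\ to exhibit a non-projective right ideal of $R$ (equivalently a module with $\mathrm{Ext}^{2}_{R}\neq 0$, so that the right global dimension is at least $2$). When each $R_n$ is right self-injective (for instance a field) there is a clean route: then $R$ is right self-injective, so if $R$ were right hereditary every cyclic module $R/J$, being a quotient of the injective module $R_R$, would be injective; by Osofsky's theorem a ring all of whose cyclic right modules are injective is semisimple artinian, contradicting the fact that the infinite product $R$ is not even noetherian (the chain $f_1R\subsetneq f_2R\subsetneq\cdots$ of partial sums $f_N=e_1+\cdots+e_N$ never stabilises). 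For arbitrary $R_n$ this self-injective shortcut is unavailable, and one must produce the offending non-projective ideal directly, exploiting the size gap between the product $R$ and the merely countable ideal $D$ through a Baer--Specker/Osofsky-type rigidity argument bounding homomorphisms out of the product by finitely supported data. Carrying out this final global-dimension computation is exactly the technical content secured in \cite{BL}, and it is where the real difficulty lies.
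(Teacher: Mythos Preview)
The paper gives no proof of this lemma at all: it is stated as a citation of Osofsky's corollary in \cite{BL}, so there is nothing to compare your argument against on the paper's side.

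As for the content of your sketch, two points deserve flagging. First, your reduction ``the product sits inside $R$ as a ring direct factor cut out by a central idempotent'' is not justified by the hypothesis. ``Contains'' here means only that $\Pi_{i\in I}R_i$ is a subring of $R$; the identity $e=(e_i)_i$ of the product is certainly an idempotent of $R$, but you have given no reason for it to be central in $R$, and without centrality $eRe$ need not coincide with $eR$ nor with the product, so the passage ``right ideals of the factor are right ideals of $R$'' breaks down. Osofsky's original argument does not make this reduction; it works directly with a suitable right ideal of $R$ built from the $e_i$.

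Second, and more seriously, you yourself acknowledge that the heart of the matter---producing, for arbitrary $R_n$, a right ideal of $R$ that is not projective---is left undone and deferred back to \cite{BL}. The preliminary observations (orthogonality of the $e_n$, projectivity of $D=\bigoplus_n e_nR$, non-splitting of $0\to D\to R\to R/D\to 0$) are correct and set the stage, and your treatment of the self-injective special case via Osofsky's cyclic-injective theorem is a nice aside, but none of this touches the actual obstruction in the general case. So what you have is a well-motivated outline that stops exactly where the genuine work begins; as a proof it is incomplete in the same way the paper's is, namely by citation.
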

\begin{lemma}\label{FP1.4}
	(47.7(2), \cite{RWF}). Let $X$ be a right $A$-module with $S=\operatorname{End}_A(X)$. Then ${_SX}$ is  $FP$-injective $S$-module if and only if for every homomorphism\\ $\phi:  {X^{(n_1)}}\rightarrow  {X^{(n_2)}}$ with $n_1,n_2\in \mathbb{N}$, $\operatorname{Coker}(\phi)$ is a $X$-cogenerated module.
\end{lemma}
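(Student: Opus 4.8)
The plan is to replace the reference's diagram definition of FP-injectivity by its standard homological form and transport everything through the two $M$-duality functors. Recall that a left $S$-module is FP-injective precisely when $Ext^1_S(P,{}_SM)=0$ for every finitely presented left $S$-module $P$, and that a finitely presented $P$ is exactly a cokernel $Coker(\beta)$ of a map $\beta\colon S^{n_2}\to S^{n_1}$ of finite free left $S$-modules. So the statement to prove becomes: $Ext^1_S(Coker(\beta),{}_SM)$ vanishes for all such $\beta$ if and only if $Coker(\phi)$ is $M$-cogenerated for every $\phi\colon M^{(n_1)}\to M^{(n_2)}$.

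First I would set up the dual functors $(-)^{*}=Hom_R(-,M)$, sending a right $R$-module to a left $S$-module, and $(-)^{\vee}=Hom_S(-,{}_SM)$, sending a left $S$-module back to a right $R$-module, together with the canonical evaluation maps $\delta_X\colon X\to X^{*\vee}$. The identifications $M^{*}\cong{}_SS$ and $(S^{n})^{\vee}\cong M^{n}$ show that every $M^{(n)}$ and every finite free left $S$-module is reflexive (so $\delta$ is an isomorphism there), and that $\phi\mapsto\phi^{*}$ is a bijection $Hom_R(M^{(n_1)},M^{(n_2)})\to Hom_S(S^{n_2},S^{n_1})$ with $\phi^{**}=\phi$. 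Writing $\beta=\phi^{*}$, the correspondence $\phi\leftrightarrow\beta$ attaches to $N:=Coker(\phi)$ the finitely presented left $S$-module $P:=Coker(\phi^{*})$, and every finitely presented $P$ arises from some $\phi$ via $\phi=\beta^{\vee}$. I would also record the elementary fact that $N$ is $M$-cogenerated if and only if $\delta_N\colon N\to N^{*\vee}$ is injective, since $\ker\delta_N=\bigcap\{\ker f: f\in Hom_R(N,M)\}$ is exactly the reject of $M$ in $N$.

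The heart of the argument is a natural isomorphism $\ker(\delta_N)\cong Ext^1_S(P,{}_SM)$. To obtain it I would dualize the presentation $M^{(n_1)}\xrightarrow{\phi}M^{(n_2)}\xrightarrow{\pi}N\to0$ to the left-exact sequence $0\to N^{*}\to S^{n_2}\xrightarrow{\phi^{*}}S^{n_1}$, factor $\phi^{*}$ through its image $K=Im(\phi^{*})$, and apply $(-)^{\vee}$ to the short exact sequences $0\to N^{*}\to S^{n_2}\twoheadrightarrow K\to0$ and $0\to K\hookrightarrow S^{n_1}\to P\to0$. Using $\delta_{M^{(n_2)}}=\mathrm{id}$ together with the naturality relation $\delta_N\circ\pi=\pi^{*\vee}$, a diagram chase gives $\ker\delta_N=\pi(\ker\pi^{*\vee})$, and the first dualized sequence identifies $\ker\pi^{*\vee}$ with the image of $Hom_S(K,M)$ in $M^{n_2}$. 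Tracking an element $g\in Hom_S(K,M)$ through these maps, and using that $S^{n_2}\twoheadrightarrow K$ is onto, I would verify that $\pi$ kills its image exactly when $g$ extends to a map $S^{n_1}\to M$; hence $g\mapsto\pi(\,\cdot\,)$ is a surjection $Hom_S(K,M)\to\ker\delta_N$ whose kernel is the image of $Hom_S(S^{n_1},M)\to Hom_S(K,M)$. Comparing with the second dualized sequence, whose cokernel is by definition $Ext^1_S(P,{}_SM)$, yields the claimed isomorphism.

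With this isomorphism in hand both implications are immediate. If ${}_SM$ is FP-injective then $Ext^1_S(P,{}_SM)=0$, so $\ker\delta_N=0$ and $Coker(\phi)=N$ is $M$-cogenerated for every $\phi$; conversely, if every such cokernel is $M$-cogenerated then $\ker\delta_N=0$, whence $Ext^1_S(P,{}_SM)=0$ for every finitely presented $P$ (each arising from a suitable $\phi$ via the bijection above), and ${}_SM$ is FP-injective. I expect the main obstacle to be the bookkeeping in the central isomorphism $\ker\delta_N\cong Ext^1_S(P,{}_SM)$: one must keep the left-$S$ and right-$R$ structures and the directions of the two dual functors consistent throughout the chase, the crucial geometric point being that surjectivity of $S^{n_2}\twoheadrightarrow K$ converts ``lies in $Im(\phi)$'' into ``extends along $K\hookrightarrow S^{n_1}$''.
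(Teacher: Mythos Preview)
The paper gives no proof of this lemma at all; it merely cites Wisbauer \cite[47.7(2)]{RWF}. Your argument is correct and is essentially the standard one found in that reference: translate FP-injectivity of ${}_SM$ into the vanishing of $Ext^1_S(P,{}_SM)$ for finitely presented $P$, pass between $R$-maps $\phi\colon M^{n_1}\to M^{n_2}$ and $S$-maps $\beta\colon S^{n_2}\to S^{n_1}$ via the $M$-dual functors, and identify $\ker\delta_{Coker(\phi)}$ with $Ext^1_S(Coker(\phi^*),{}_SM)$ through the two short exact sequences obtained by factoring $\phi^*$ through its image. The only point worth a brief remark is that ``$M$-cogenerated'' in the paper means admitting a monomorphism into a direct \emph{product} $M^{\Lambda}$, which is indeed equivalent to $\ker\delta_N=Rej_M(N)=0$; you use this equivalence implicitly, and it is standard.
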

\section{$\Sigma$-dual-Rickart modules}

\begin{definition}\label{S2.1}
	An $A$-module $X$ is said to be {\it $\Sigma$-d-Rickart} (in short {\it $\Sigma$-d-Rickart} ) if every direct sum of copies of $X$ is d-Rickart. Equivalently, a module $X$ is {\it $\Sigma$-d-Rickart} if $X^{(\Lambda)}$ is d-Rickart for every non-empty arbitrary index set $\Lambda$. A ring $A$ is called a right (left) {\it $\Sigma$-d-Rickart} if $A_A$ $(_AA)$ is a {\it $\Sigma$-d-Rickart} $A$-module and a ring $A$ is called {\it $\Sigma$-d-Rickart} if $A$ is a right and left {\it $\Sigma$-d-Rickart} ring.
\end{definition}
~\\
\begin{example}\label{S2.2}
	\begin{enumerate}
		\item[(i)] The $\mathbb{Z}$-module $\mathbb{Z}_{p^{\infty}}$ is a {\it $\Sigma$-d-Rickart}.
		\item[(ii)] Every injective $A$-module over the hereditary Noetherian ring $A$ is {\it $\Sigma$-d-Rickart} (see Theorem \ref{T2.17}). 
		\item[(iii)] If the ring $A$ is semisimple Artinian, then every $A$-module is {\it $\Sigma$-d-Rickart} (see Theorem \ref{SDR1.5}).
	\end{enumerate}
~\\
\end{example}
It is easy to see that if $X$ is a {\it $\Sigma$-d-Rickart} module, then $X$ is also a d-Rickart module. Now, we provide an example which shows that a d-Rickart module need not be {\it $\Sigma$-d-Rickart} module. For instant, let $A=\Pi_{n=1}^{\infty} F_n$ where $F_n=\mathbb{Z}_p$ for each $n$ and $p$ is any prime. Clearly, $A$ is a self-injective von Neumann regular ring. By (Proposition 2.27, \cite{LRRD}), $X=A$ is a d-Rickart $A$-module, while the module $X^{(A)}$ is not a d-Rickart $A$-module (see Example 3.9, \cite{LRRD}). Hence $X$ can not be a {\it $\Sigma$-d-Rickart} $A$-module.\\~\\
Now, we prove when a d-Rickart module is {\it $\Sigma$-d-Rickart}.\\

\begin{proposition}
	Let $X$ be an $A$-module such that $X$ is fully invariant in $X^{(\Lambda)}$ for every arbitrary index set $\Lambda$. Then
	\begin{enumerate}
		\item[(i)] $X$ is a d-Rickart module if and only if $X$ is {\it $\Sigma$-d-Rickart}.
		\item[(ii)] $X$ is a Rickart module if and only if $X$ is $\Sigma$-Rickart.
		
	\end{enumerate} 
	\begin{proof}
		${\bf (i)}$ Let $X$ be a d-Rickart module and fully invariant in $X^{(\Lambda)}$. So by (Proposition 5.14, \cite{LRRD}), $X^{(\Lambda)}$ is a d-Rickart module. Hence, $X$ is a {\it $\Sigma$-d-Rickart} module. The converse is clear from the definition of {\it $\Sigma$-d-Rickart} module.\\
		${\bf (ii)}$ The proof of this part is dual of part $(i)$ and it is consequence of  (Proposition 2.34, \cite{LRRS}).
	\end{proof}
\end{proposition}

\begin{lemma}\label{S2.3}
	\begin{enumerate}
		\item[(i)] Every direct summand of a {\it $\Sigma$-d-Rickart} module is {\it $\Sigma$-d-Rickart}.
		\item[(ii)] Every direct sum of copies of a {\it $\Sigma$-d-Rickart} module is {\it $\Sigma$-d-Rickart}.
		\item[(iii)] If $X$ is a {\it $\Sigma$-d-Rickart} module, then for any index sets $\Lambda _1$ and $\Lambda _2$, $X^{(\Lambda _1)}$ is $X^{(\Lambda _2)}$ d-Rickart.
	\end{enumerate}
	
	\begin{proof}
		${\bf (i)}$ Let $X$ be a {\it $\Sigma$-d-Rickart} module and $N\leq ^{\oplus}X$. Then for any index set $\Lambda$, $N^{(\Lambda)}$ is also a direct summand of $X^{(\Lambda)}$. Since $X^{(\Lambda)}$ is a d-Rickart module, $N^{(\Lambda)}$ is a d-Rickart module. Hence, $N$ is a {\it $\Sigma$-d-Rickart} module.\\
		${\bf (ii)}$ 	Let $X$ be a {\it $\Sigma$-d-Rickart} module. Then $X^{(\Lambda_1)}$ is a d-Rickart module for every non-empty arbitrary index set $\Lambda_{1}$. Therefore, by the definition of {\it $\Sigma$-d-Rickart} module, $(X^{(\Lambda_1)})^{(\Lambda_2)}=X^{(\Lambda_1 \times \Lambda_2)}$ is a d-Rickart module for every index set $\Lambda_2$. Hence, $X^{(\Lambda_1)}$ is a {\it $\Sigma$-d-Rickart} module.\\
		${\bf (iii)}$ 	Let $X$ be a {\it $\Sigma$-d-Rickart} module. Then $X^{(\Lambda _1\times \Lambda _2)}$ is also a d-Rickart module. Let $\varphi : X^{(\Lambda _1)}\rightarrow X^{(\Lambda _2)}$ be a homomorphism. It is clear that $X^{(\Lambda_1)}$ and $X^{(\Lambda_2)}$ are direct summands of $X^{(\Lambda _1\times \Lambda _2)}$. Therefore, by Lemma \ref{LD} $\operatorname{Im}(\varphi)\leq ^{\oplus}X^{(\Lambda_2)}$. Hence, $X^{(\Lambda_1)}$ is $X^{(\Lambda_2)}$-d-Rickart.
	\end{proof}
\end{lemma}

A right $A$-module $X$ is called cohereditary if every factor module of $X$ is an injective module \cite{RWF}.\\

In the following proposition, we prove that every cohereditary module over the Noetherian ring is {\it $\Sigma$-d-Rickart}.\\

\begin{proposition}\label{S2.6}
	Every cohereditary module $X$ over the Noetherian ring $A$ is a {\it $\Sigma$-d-Rickart} $A$-module.
	\begin{proof}
		Let $X$ be a cohereditary module and $A$ be a Noetherian ring. By (Remark (i), \cite{MS}), $X^{(\Lambda)}$ is a cohereditary module for any arbitrary index set $\Lambda$. So, for any $\psi \in \operatorname{End}_A(X^{(\Lambda)})$, $X^{(\Lambda)}/\operatorname{Ker}(\psi)\cong \operatorname{Im}(\psi)$ is an injective $A$-module. Therefore, the exact sequence $0\rightarrow \operatorname{Im}(\psi)\rightarrow X^{(\Lambda)}\rightarrow \operatorname{Coker}(\psi)\rightarrow 0$ splits, which implies that $\operatorname{Im}(\psi)$ is a direct summand of $X^{(\Lambda)}$. Hence, $X$ is a {\it $\Sigma$-d-Rickart} module.
	\end{proof} 	
\end{proposition}

\begin{proposition}
	If $X$ is an $A$-module such that $X^{(\Lambda)}$ is an endoregular module for every arbitrary index set $\Lambda$, then $X$ is a {\it $\Sigma$-d-Rickart} module. 
	\begin{proof}
		The proof is clear from (Proposition 2.3, \cite{RRER}).
	\end{proof}
\end{proposition}
~\\
Let $X$ and $N$ be $A$-modules. Then, $N$ is called an $X$-$cogenerated$ module if there exists a monomorphism from $N$ to $X^{\Lambda}$ for every non-empty index set $\Lambda$ and $N$ is called a $finitely$ $X$-$cogenerated$ module if there exists a  monomorphism from $N$ to $X^{\mathcal{I}}$ for every finite subset $\mathcal{I}$ of $\Lambda$ (see \cite{Anderson} and \cite{RWF}). Now, we generalize the concept of $X$-cogenerated modules as strongly $X$-cogenerated modules.\\

\begin{definition}\label{S2.7}
	Let $\mathcal{M}$ be a non-empty class of right $A$-modules. We call a module $N$ strongly cogenerated by $\mathcal{M}$ if there is a monomorphism $\sigma : N\rightarrow \bigoplus_{\lambda \in \Lambda} X_{\lambda}$, where $X_{\lambda}\in \mathcal{M}$ and $\Lambda$ is a non-empty arbitrary index set. An $A$-module $N$ is said to be strongly cogenerated by a module $X$ (or strongly $X$-cogenerated), if there exists a monomorphism $\sigma : N\rightarrow$ $X^{(\Lambda)}$ for every non-empty arbitrary index set $\Lambda$.
\end{definition}
~\\~
\begin{example}\label{S2.8}
	\begin{enumerate}
		\item[(i)] Every finitely $X$-cogenerated modules are strongly $X$-cogenerated.
		\item[(ii)]  Every strongly $X$-cogenerated modules are $X$-cogenerated, while the converse need not be true. Since every torsion-free abelian group is cogenerated by $\mathbb{Q}$ (Example 8.3, \cite{Anderson}), therefore $ \mathbb{Z}^{\mathbb{N}}$ is cogenerated by $\mathbb{Q}$ but it is not strongly cogenerated by $\mathbb{Q}$. In fact, $\mathbb{Z}^{\mathbb{N}}$ is not embedded in $\mathbb{Q}^{(\mathbb{N})}$.
	\end{enumerate}
\end{example}
~\\
\begin{lemma}\label{S2.9}
	The direct sum of any two strongly $X$-cogenerated modules is a strongly $X$-cogenerated.
	\begin{proof}
		Let $X_1$ and $X_2$ be two strongly $X$-cogenerated modules. Then for some non-empty index sets $\Lambda _1$ and $\Lambda _2$,  $\psi _1 : X_1\rightarrow X^{(\Lambda _1)}$ and $\psi _2 :X_2 \rightarrow X^{(\Lambda _2)} $ are monomorphisms. Therefore, by (9.2, \cite{RWF}) $\psi_1\oplus \psi_2 : X_1 \oplus X_2 \rightarrow X^{(\Lambda _1)}\oplus X^{(\Lambda _2)}$ is a monomorphism. Hence, $X_1\oplus X_2$ is also strongly $X$-cogenerated.
	\end{proof}
\end{lemma}

For an $A$-module $X$, $Add(X)$ denotes the class of all $A$-modules which are direct summands of direct sums of copies of $X$ \cite{RWT}.\\

\begin{proposition}\label{S2.11}
	Let $X$ be a {\it $\Sigma$-d-Rickart} module and $U\in Add(X)$. Then, the sum of two strongly $X$-cogenerated submodules of $U$ is strongly $X$-cogenerated.
	\begin{proof}
		Let $U_1$ and $U_2$ be two strongly $X$-cogenerated submodules of $U$ and $U\in Add(X)$. Consider the short exact sequence:
		$$ 0\rightarrow U_1 \cap U_2\xrightarrow{f} U_1 \oplus U_2\xrightarrow{g} U_1 +U_2\rightarrow 0$$
		where for any $a\in U_1 \cap U_2$, $f(a)=(a,a)$ and $g(u_1, u_2)=u_1+u_2$ where $u_1\in U_1$, $u_2\in U_2$. Since $U_1$ and $U_2$ are strongly $X$-cogenerated, therefore by Lemma \ref{S2.9} $U_1 \oplus U_2$ is strongly $X$-cogenerated. So there exists a monomorphism $\phi : U_1\oplus U_2 \rightarrow X^{(\Lambda)}$ for some index set $\Lambda$. Since $X$ is a {\it $\Sigma$-d-Rickart} module, so from Lemma \ref{S2.3}(iii) $U$ is a $X^{(\Lambda)}$-d-Rickart because $U\in Add(X)$. Therefore $ U_1 \cap U_2$ is also $X^{(\Lambda)}$-d-Rickart. So by Lemma \ref{LD}, $\operatorname{Im}(\phi f)$ is a direct summand of $X^{(\Lambda)}$. Thus from Lemma \ref{LM1.1}, $\operatorname{Im}(f)\leq ^{\oplus} U_1\oplus U_2$ because $\operatorname{Im}(\phi f)=\phi (\operatorname{Im}(f))$, where $\phi$ is a monomorphism. Therefore, $(U_1\oplus U_2)/\operatorname{Im}(f)$ is strongly $X$-cogenerated. Since $(U_1\oplus U_2)/\operatorname{Im}(f)\cong U_1+U_2$, hence $U_1+U_2$ is strongly $X$-cogenerated.
	\end{proof}
\end{proposition}

In the following proposition we show when every $A$-module is {\it $\Sigma$-d-Rickart}.\\

\begin{proposition}\label{S2.15}
	An $A$-module $X$ is {\it $\Sigma$-d-Rickart} if and only if every element in $Add(M)$ is {\it $\Sigma$-d-Rickart}.
	\begin{proof}
		Let $X$ be a {\it $\Sigma$-d-Rickart} module and $N\in Add(X)$ be arbitrary. Then, $X^{(\Lambda)}$ is a d-Rickart module and $N\leq ^{\oplus} X^{(\Lambda)}$ for an index set $\Lambda$. Hence, from Lemma \ref{S2.3} $N$ is a {\it $\Sigma$-d-Rickart} module. The converse follows from the definition of {\it $\Sigma$-d-Rickart} module.	
	\end{proof}
\end{proposition}

The following theorem illustrates when every injective module is {\it $\Sigma$-d-Rickart}. The proof of the part $(i)\Rightarrow (ii)$ follows from (Theorem 2.29, \cite{LRRD}) but we include its proof for reader's convenience.\\

\begin{theorem}\label{T2.17}
	Let $A$ be a Noetherian ring, then the following conditions are equivalent:
	\begin{enumerate}
		\item[(i)] Every injective $A$-module is a $\Sigma$-dual
		Rickart module;
		\item[(ii)] $A$ is a right hereditary ring.
	\end{enumerate}
	\begin{proof}	
		$(i)\Rightarrow (ii)$ Let $X$ be an injective $A$-module and $N$ be a submodule of $X$. Clearly, $X$ and $E(X/N)$ both are injective $A$-modules, so $X\oplus E(X/N)$ is also an injective $A$-module. By hypothesis $X\oplus E(X/N)$ is a {\it $\Sigma$-d-Rickart} module. So $X\oplus E(X/N)$ is a d-Rickart module. Thus from Lemma \ref{LD}, $X$ is $E(X/N)$-d-Rickart. Now consider a map $\psi : X\rightarrow E(X/N)$ such that $\psi (\zeta)=\zeta +N$ for every $\zeta \in X$. Then, $\operatorname{Im}(\psi)=X/N$ is a direct summand of $E(X/N)$. So, $X/N$ is an injective module. Hence $A$ is a right hereditary ring.\\
		$(ii)\Rightarrow (i)$ Let $A$ be a Noetherian ring and $X$ be an injective module. Then $X^{(\Lambda)}$ is also an injective $A$-module. Now, suppose that $\phi \in \operatorname{End}_A(X^{(\Lambda)})$. Since by assumption $A$ is a hereditary ring, $\operatorname{Im}(\phi) \cong X^{(\Lambda)} /\operatorname{Ker}(\phi)$ is an injective module. Therefore, $\operatorname{Im}(\phi)$ is a direct summand of $X^{(\Lambda)}$. Hence $X$ is a {\it $\Sigma$-d-Rickart} module.
	\end{proof}
\end{theorem}


\section{$\Sigma$-Rickart modules versus {\it $\Sigma$-d-Rickart} modules}

In this section, we find connections between the class of $\Sigma$-Rickart modules and the class of {\it $\Sigma$-d-Rickart} modules. Further, we show that  when a {\it $\Sigma$-d-Rickart} module is $\Sigma$-Rickart and vice-versa.\\~\\
The following examples justify that $\Sigma$-Rickart modules and {\it $\Sigma$-d-Rickart} modules are two different types of structure.\\

\begin{example}\label{SDR1.1}
	\begin{enumerate}
		\item[(i)] It is clear from (Theorem 2.29, \cite{LRRD}) that for the right hereditary ring $A$, every injective right $A$-module is d-Rickart. Therefore, $\mathbb{Z}_{p^{\infty}}^{(\Lambda)}$ is a d-Rickart $\mathbb{Z}$-module for every arbitrary index set $\Lambda$. Thus $\mathbb{Z}_{p^{\infty}}$ is a {\it $\Sigma$-d-Rickart} $\mathbb{Z}$-module, while $\mathbb{Z}_{p^{\infty}}$ is not a $\Sigma$-Rickart module. In fact, $\mathbb{Z}_{p^{\infty}}$ is not a Rickart module (see Example 2.17 \cite{LRRR}).
		\item[(ii)] Since from (Theorem 2.26, \cite{LRRR}) every free (projective) module over a right hereditary ring is a Rickart module, therefore $\mathbb{Z}$ considered as a $\mathbb{Z}$-module is $\Sigma$-Rickart, while $\mathbb{Z}$ is not a $\mathbb{Z}$-d-Rickart module. Hence, it can not be a {\it $\Sigma$-d-Rickart} module.
	\end{enumerate}
\end{example}
~\\
\begin{definition}
	An $A$-module $X$ is called $\Sigma$-$\operatorname{C2}$ module or said to have $\Sigma$-$\operatorname{C2}$ condition, if every direct sum of copies of $X$ is a $\operatorname{C2}$-module. Analogously, a module $X$ is said to be $\Sigma$-(quasi-)injective, if every direct sum of copies of $X$ is a (quasi-)injective module (see \cite{Albu}).
\end{definition}
~\\
\begin{proposition}\label{SDR1.2}
	A $\Sigma$-Rickart module with $\Sigma$-$\operatorname{C2} condition$ is {\it $\Sigma$-d-Rickart}.	
	\begin{proof}
		Let $A$ be a ring and $X$ be an $A$-module. To prove $X$ is a {\it $\Sigma$-d-Rickart} module, it is enough to show that $\operatorname{Im}(\psi)$ is a direct summand of $X^{(\Lambda)}$ for every $\psi \in \operatorname{End}_A( X^{(\Lambda)})$ where $\Lambda$ is a non-empty index set. For it, let $\psi \in \operatorname{End}_A( X^{(\Lambda)})$. Since $X$ is a $\Sigma$-Rickart module, $\operatorname{Ker}(\psi)$ is a direct summand of  $X^{(\Lambda)}$. So for some submodule $N$ of $X^{(\Lambda)}$, $X^{(\Lambda)}=\operatorname{Ker}(\psi)\oplus N$. Clearly, the restriction map $\psi _N$ is one-one. By assumption, $X$ is a $\Sigma$-$\operatorname{C2}$ module, so $X^{(\Lambda)}$ is a $\operatorname{C2}$ module. Therefore, $\psi(N)$ is a direct summand of $X^{(\Lambda)}$. Thus, $\operatorname{Im}(\psi)=\{ 0\}\oplus \psi(N)$ is a direct summand of $X^{(\Lambda)}$. Hence, $X$ is a {\it $\Sigma$-d-Rickart} module. 
	\end{proof}
\end{proposition}

The following example shows that the condition ``$X$ is $\Sigma$-$\operatorname{C2}$" in Proposition \ref{SDR1.2} is not superfluous.\\
\begin{example} 
	The $\mathbb{Z}$-module $\mathbb{Z}$ is a $\Sigma$-Rickart module but not a {\it $\Sigma$-d-Rickart} module and also not a $\Sigma$-$\operatorname{C2}$ module. In fact, $\mathbb{Z}$-module $\mathbb{Z}$ is not a d-Rickart module as well as not a $\operatorname{C2}$-module. 
\end{example}
~\\
\begin{corollary}\label{SDR1.3}
	If $A$ is a right Noetherian ring, then every injective $\Sigma$-Rickart $A$-module is {\it $\Sigma$-d-Rickart}.	
	\begin{proof}
		Let $A$ be a right Noetherian ring and $X$ be an injective $A$-module. It is well known that over the right Noetherian ring, every direct sum of copies of an injective module is injective. Therefore, $X^{(\Lambda)}$ is injective and so $X^{(\Lambda)}$ is a $\operatorname{C2}$ module. Hence, by Proposition \ref{SDR1.2} $X$ is a {\it $\Sigma$-d-Rickart} module.
	\end{proof}
\end{corollary}

\begin{corollary}
	Every $\Sigma$-quasi-injective, $\Sigma$-Rickart module is {\it $\Sigma$-d-Rickart}.
\end{corollary}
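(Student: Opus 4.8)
The plan is to reduce this immediately to Proposition \ref{SDR1.2}, exactly as was done for Corollary \ref{SDR1.3}, by showing that the hypothesis of $\Sigma$-quasi-injectivity forces the module to be $\Sigma$-$C2$. Once that reduction is in place, the assumption that $M$ is also $\Sigma$-Rickart lets me invoke Proposition \ref{SDR1.2} verbatim to conclude that $M$ is $\Sigma$-dual Rickart.

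The single substantive step is the implication ``quasi-injective $\Rightarrow$ $C2$''. So first I would fix an arbitrary non-empty index set $\Lambda$ and observe that, since $M$ is $\Sigma$-quasi-injective, the module $M^{(\Lambda)}$ is quasi-injective. The key fact I would then cite is the standard result that every quasi-injective module is a $C2$-module (quasi-injective modules are continuous, hence satisfy both the $C1$ and $C2$ conditions; see \cite{BJM}). Applying this to each $M^{(\Lambda)}$ shows that $M^{(\Lambda)}$ is a $C2$-module for every $\Lambda$, which is precisely the statement that $M$ is a $\Sigma$-$C2$ module.

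Having established that $M$ is $\Sigma$-$C2$, and combining this with the hypothesis that $M$ is $\Sigma$-Rickart, Proposition \ref{SDR1.2} applies directly and yields that $M$ is a $\Sigma$-dual Rickart module, completing the argument. I do not anticipate any genuine obstacle here: the proof is a clean specialization of the $\Sigma$-$C2$ machinery, and the only external ingredient is the well-known quasi-injective $\Rightarrow$ $C2$ implication. The one point to state carefully is that $\Sigma$-quasi-injectivity (rather than mere quasi-injectivity) is exactly what guarantees that \emph{every} direct sum $M^{(\Lambda)}$ is quasi-injective, so that the $C2$ property holds at every cardinality and $M$ is genuinely $\Sigma$-$C2$; this is the structural advantage over Corollary \ref{SDR1.3}, where the noetherian hypothesis on $R$ was used precisely to upgrade quasi-injectivity to $\Sigma$-quasi-injectivity.
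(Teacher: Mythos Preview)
Your proposal is correct and matches the paper's intended approach: the paper states this corollary without proof, treating it as an immediate consequence of Proposition \ref{SDR1.2} via the standard fact that quasi-injective modules satisfy $C2$. Your explanation of why $\Sigma$-quasi-injectivity (rather than mere quasi-injectivity) is needed to obtain $\Sigma$-$C2$ is exactly the point, and nothing further is required.
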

~\\
A module $X$ is said to be $\Sigma$-$\operatorname{C3}$ if every direct sum of copies of $X$ is a $\operatorname{C3}$-module \cite{C3}.\\
\begin{corollary}
	Every $\Sigma$-$\operatorname{C3}$, $\Sigma$-Rickart module is a {\it $\Sigma$-d-Rickart} module.
	\begin{proof}
		It is clear from (Corollary 2.7, \cite{C3}) that a right $A$-module $X$ is $\Sigma$-$\operatorname{C3}$ if and only if it is a $\Sigma$-$\operatorname{C2}$. Hence the result follows from Proposition \ref{SDR1.2}.
	\end{proof}
\end{corollary}	

\begin{definition}
	An $A$-module $X$ is called $\Sigma$-$\operatorname{D2}$  or said to have $\Sigma$-$\operatorname{D2}$ condition, if every direct sum of copies of $X$ is a $\operatorname{D2}$-module. Analogously, a module $X$ is said to be $\Sigma$-quasi-projective if every direct sum of copies of $X$ is quasi-projective \cite{Albu}.	
\end{definition}
~\\
In the following proposition, we show when a {\it $\Sigma$-d-Rickart} module is a $\Sigma$-Rickart module.\\
\begin{proposition}\label{SDR1.4}
	Every $\Sigma$-$\operatorname{D2}$, {\it $\Sigma$-d-Rickart} module are $\Sigma$-Rickart.
	\begin{proof}
		Let $\Lambda$ be any arbitrary index set and $\psi \in \operatorname{End}_A(X^{(\Lambda)})$. Since $X$ is a {\it $\Sigma$-d-Rickart}  module, $X^{(\Lambda)}/\operatorname{Ker}(\psi)\cong \operatorname{Im}(\psi)\leq ^{\oplus}X^{(\Lambda)}$. By hypothesis $X$ is a $\Sigma$-$\operatorname{D2}$ module, so $X^{(\Lambda)}$ has $\operatorname{D2}$-condition. Therefore, $\operatorname{Ker}(\psi)$ is direct summand of $X^{(\Lambda)}$. Hence, $X$ is a $\Sigma$-Rickart module.
	\end{proof}	
\end{proposition}

The following example shows that the condition ``$X$ is $\Sigma$-$\operatorname{D2}$" in Proposition \ref{SDR1.4} is not superfluous. \\

\begin{example}
	It is clear from Example \ref{SDR1.1} that the $\mathbb{Z}$-module $\mathbb{Z}_{p^{\infty}}^{(\Lambda)}$ is a d-Rickart module for every arbitrary index set $\Lambda$. Thus, $Z_{p^{\infty}}$ is a {\it $\Sigma$-d-Rickart} $\mathbb{Z}$-module, although it is not a $\Sigma$-$\operatorname{D2}$ $\mathbb{Z}$-module as it is not a $\operatorname{D2}$-module. While $Z_{p^{\infty}}$ is not a $\Sigma$-Rickart module because it is not a Rickart module.
\end{example} 
~\\
An $A$-module $X$ is called $\Sigma$-$\operatorname{D3}$  or said to have $\Sigma$-$\operatorname{D3}$ condition, if every direct sum of copies of $X$ is a $\operatorname{D3}$-module.\\

\begin{corollary}
	For an $A$-module $X$, the following statements are hold:
	\begin{enumerate}
		\item[(i)] Every projective {\it $\Sigma$-d-Rickart} module is $\Sigma$-Rickart.
		\item[(ii)] Every $\Sigma$-quasi-projective {\it $\Sigma$-d-Rickart} module is $\Sigma$-Rickart.
		\item[(iii)] Every $\Sigma$-$\operatorname{D3}$, {\it $\Sigma$-d-Rickart} module is {\it $\Sigma$-d-Rickart}.
	\end{enumerate}
	\begin{proof}
		It is easy to see that every projective and every $\Sigma$-quasi-projective modules are $\Sigma$-$\operatorname{D2}$ modules. Hence part $(i)$ and $(ii)$ are easily follows from Proposition \ref{SDR1.4}.\\
		For part $(iii)$, let $X$ be a $\Sigma$-$\operatorname{D3}$ module so by (Corollary 9, \cite{D3}) $X$ is a $\Sigma$-$\operatorname{D2}$ module. Hence, the proof is clear from Proposition \ref{SDR1.4}.
	\end{proof}
\end{corollary}

\begin{corollary}
	Let $A$ be a uniserial ring. Then every quasi-injective, {\it $\Sigma$-d-Rickart} module over $A$ is a $\Sigma$-Rickart module.
	\begin{proof}
		The proof follows from Proposition \ref{SDR1.4} and from the fact that every quasi-injective module over a uniserial ring is $\Sigma$-quasi-projective (see Theorem 5.1 \cite{Fuller}).
	\end{proof}	
\end{corollary}

In the following theorem, we characterize the semisimple Artinian ring in terms of {\it $\Sigma$-d-Rickart} modules and $\Sigma$-Rickart modules.\\

\begin{theorem}\label{SDR1.5}
	The following conditions are equivalent for a ring $A$:
	\begin{enumerate}
		\item[(i)] Every right $A$-module is $\Sigma$-Rickart;
		\item[(ii)] Every right $A$-module is {\it $\Sigma$-d-Rickart};
		\item[(iii)] $A$ is a right semisimple Artinian ring.
	\end{enumerate}
	\begin{proof}
		$(i)\Leftrightarrow (iii)$ Let $A$ be ring and $X$ be an $A$-module. It is clear from (Theorem 2.25, \cite{LRRR}) that a ring $A$ is right semisimple Artinian if and only if every right $A$-module is Rickart. Therefore, for any arbitrary index set $\Lambda$, $X^{(\Lambda)}$ is a Rickart module if and only if $A$ is a right semisimple Artinian ring. Hence, $A$ is a right semisimple Artinian ring if and only if every right $A$-module is $\Sigma$-Rickart.\\
		$(ii) \Leftrightarrow (iii)$ Recall from (Theorem 2.24, \cite{LRRD}) that a ring $A$ is right semisimple Artinian if and only if every right $A$-module is d-Rickart. Therefore, for any arbitrary index set $\Lambda$, $X^{(\Lambda)}$ is a d-Rickart module if and only if $A$ is a right semisimple Artinian ring. Hence, $A$ is a right semisimple Artinian ring if and only if every right $A$-module is {\it $\Sigma$-d-Rickart}.
	\end{proof}
\end{theorem}


\section{The Endomorphism ring of a {\it $\Sigma$-d-Rickart} module}

In this section, we study the endomorphism ring of a {\it $\Sigma$-d-Rickart} module and characterize the semi-hereditary rings, hereditary rings and regular rings in terms of it. \\

\begin{proposition}\label{S3.2}
	If $X$ is a {\it $\Sigma$-d-Rickart} $A$-module, then the endomorphism ring  $S=\operatorname{End}_A(X)$ is (left) semi-hereditary. 
	\begin{proof}
		Let $X$ be a {\it $\Sigma$-d-Rickart} module. Then, $X^{(n)}$ is a d-Rickart module for every $n\in \mathbb{N}$. From  (Proposition  3.1, \cite{LRRD}), $\operatorname{End}_A(X^{(n)})\cong Mat_n(S)$ is a left Rickart ring. Hence, $S$ is a left semi-hereditary from  (Proposition, \cite{LW}).
	\end{proof}
\end{proposition}
~\\
\begin{proposition}\label{S3.3}
	If $X$ is a right {\it $\Sigma$-d-Rickart} module and $S=\operatorname{End}_A(X)$, then ${_SX}$ is a FP-injective $S$-module.
	\begin{proof}
		Let $X$ be a {\it $\Sigma$-d-Rickart} module and $\phi : X^{(n_1)}\rightarrow X^{(n_2)}$ be any homomorphism. Then, $\operatorname{Im}(\phi)\leq^{\oplus}X^{(n_2)}$ which implies that $\operatorname{Coker}(\phi)=X^{(n_2)}/ \operatorname{Im}(\phi)$ is isomorphic to a direct summand of $X^{(n_2)}$. Thus, $\operatorname{Coker}(\phi)$ is $X$-cogenerated. Hence, from Lemma \ref{FP1.4} ${_SX}$ is FP-injective $S$-module.
	\end{proof}
\end{proposition}
~\\
\begin{proposition}\label{S3.4}
	If $X$ is a finitely generated {\it $\Sigma$-d-Rickart} module with endomorphism ring $S=\operatorname{End}_A(X)$, then $S$ is a left hereditary ring.
	\begin{proof}
		Since $X$ is a finitely generated module, $\operatorname{End}_A(X^{(\Lambda)})\cong \operatorname{End}_{S}({S}^{(\Lambda)})$ for any non-empty index set $\Lambda$. By hypothesis, $X$ is a {\it $\Sigma$-d-Rickart} module, therefore $X^{(\Lambda)}$ is d-Rickart. From (Proposition 3.1, \cite{LRRD}) $\operatorname{End}_A(X^{(\Lambda)})$ is a left Rickart ring. Thus, $\operatorname{End}_{S}({S}^{(\Lambda)})=\operatorname{CFM}_{\Lambda}(S)$ is a left Rickart ring. Hence, $S$ is a left hereditary ring from (Proposition 3.20, \cite{LRRS}).	
	\end{proof}
\end{proposition}

The following example shows that the finitely generated condition in Proposition \ref{S3.4} is not superfluous. \\

\begin{example}\label{S3.1}
	From (Theorem 2.29, \cite{LRRD}), it is clear that for a right hereditary ring $A$, $E(X)$ is a d-Rickart $A$-module for any right $A$-module $X$. Therefore, $\mathbb{Q}^{(\Lambda)}$ is a {\it $\Sigma$-d-Rickart} $\mathbb{Z}$-module, where $\Lambda$ is an arbitrary index set. Clearly, $\mathbb{Q}^{(\Lambda)}$ is not finitely generated $\mathbb{Z}$-module.
	Since $\operatorname{End}_{\mathbb{Z}}(\mathbb{Q}^{(\Lambda)}) \cong \operatorname{CFM}_{\Lambda}(\mathbb{Q})$ contains $\Pi _{\lambda \in \Lambda} A_{\lambda}$ where $A_{\lambda} =\mathbb{Q}$ for each $\lambda \in \Lambda$, therefore by Lemma \ref{LI}, $\operatorname{End}_{\mathbb{Z}}(\mathbb{Q}^{(\Lambda)})$ is not a right hereditary ring.
\end{example}
~\\
The following proposition illustrates when the endomorphism ring of a {\it $\Sigma$-d-Rickart} module is a von Neumann regular.\\

\begin{proposition}\label{S3.7}
	If $X$ is a projective {\it $\Sigma$-d-Rickart} module, then the endomorphism ring  $S=\operatorname{End}_A(X)$ is a von Neumann regular.
	\begin{proof}
		Let $X$ be a {\it $\Sigma$-d-Rickart} module and $f \in \operatorname{End}_A(X^{(\Lambda)})$ be an endomorphism. Then, $X^{(\Lambda)}$ is a d-Rickart module (where $\Lambda$ be an index set) and $X/\operatorname{Ker}(f)\cong \operatorname{Im}(f)\leq ^{\oplus} X^{(\Lambda)}$. Since $X$ is a projective module, $\operatorname{Ker}(f)\leq ^{\oplus} X^{(\Lambda)}$. Hence, $\operatorname{End}_A(X^{(\Lambda)})\cong \operatorname{CFM}_{\Lambda} (S)$ is a von Neumann regular which implies that $S$ is a von Neumann regular ring. 
	\end{proof}
\end{proposition}

\begin{corollary}
	Every projective {\it $\Sigma$-d-Rickart} module is endoregular.
\end{corollary}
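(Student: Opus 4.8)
The plan is to recognize that this corollary is, essentially, a reformulation of Proposition~\ref{S3.7} once the terminology of endoregular modules is unpacked. The definition recalled immediately above the corollary (from \cite{RRER}) states that a module $M$ is \emph{endoregular} precisely when its endomorphism ring $S = End_R(M)$ is von Neumann regular. Thus establishing endoregularity is literally the same task as verifying von Neumann regularity of $S$.

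First I would let $M$ be a projective $\Sigma$-dual Rickart module and apply Proposition~\ref{S3.7} directly: under exactly these hypotheses that proposition concludes that $S = End_R(M)$ is a von Neumann regular ring. Second, I would invoke the definition of endoregular module, which says that $M$ is endoregular if and only if $S = End_R(M)$ is von Neumann regular. Chaining these two statements yields that $M$ is endoregular.

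There is no substantive obstacle here; the corollary is a one-step deduction that merely translates the conclusion of Proposition~\ref{S3.7} into the vocabulary of endoregular modules. The only thing worth noting is that the hypotheses of the corollary (projective and $\Sigma$-dual Rickart) coincide verbatim with those of Proposition~\ref{S3.7}, so the proposition applies without any additional verification.

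\begin{proof}
By Proposition~\ref{S3.7}, since $M$ is a projective $\Sigma$-dual Rickart module, the endomorphism ring $S = End_R(M)$ is a von Neumann regular ring. By the definition of an endoregular module, $M$ is endoregular if and only if $S = End_R(M)$ is von Neumann regular. Hence $M$ is an endoregular module.
\end{proof}
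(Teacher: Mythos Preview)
Your proposal is correct and matches the paper's approach exactly: the paper states this corollary immediately after Proposition~\ref{S3.7} and the definition of endoregular, with no separate proof, because it follows at once by combining that proposition with the definition, just as you do.
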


\section*{Acknowledgment}
The first author gratefully acknowledges UGC, INDIA for providing JRF-SRF fellowship to carry out this research work.


\end{document}